\newtheorem{thm}{Theorem}
\newtheorem{lem}[thm]{Lemma}
\newtheorem{cor}[thm]{Corollary}
\newtheorem{defn}[thm]{Definition}
\newtheorem{lemma}[thm]{Lemma}
\newcommand{\SL}{\operatorname{SL}}
\newcommand{\Tr}{\operatorname{Tr}}
\numberwithin{equation}{section}
\numberwithin{thm}{section}
\numberwithin{table}{section}
\def\squareforqed{\hbox{\rlap{$\sqcap$}$\sqcup$}}
\def\qed{\ifmmode\squareforqed\else{\unskip\nobreak\hfil
\penalty50\hskip1em\null\nobreak\hfil\squareforqed
\parfillskip=0pt\finalhyphendemerits=0\endgraf}\fi}
\def\Bsf{\mathsf B}
\def\Esf{\mathsf E}
\def\cB{{\mathcal B}}
\def\cD{{\mathcal D}}
\def \Z {{\mathbb Z}}
 \def\\{\cr}
\def\({\left(}
\def\){\right)}
\def\mand{\qquad\mbox{and}\qquad}
\newcommand{\bC}{\mathbb{C}}
\newcommand{\bH}{\mathbb{H}}
\newcommand{\bN}{\mathbb{N}}
\newcommand{\bR}{\mathbb{R}}
\newcommand{\bZ}{\mathbb{Z}}
\def\squareforqed{\hbox{\rlap{$\sqcap$}$\sqcup$}}
\def\qed{\ifmmode\squareforqed\else{\unskip\nobreak\hfil
\penalty50\hskip1em\null\nobreak\hfil\squareforqed
\parfillskip=0pt\finalhyphendemerits=0\endgraf}\fi}
\def\cB{{\mathcal B}}
\def\cD{{\mathcal D}}
\def \Z {{\mathbb Z}}
 \def\\{\cr}
\def\({\left(}
\def\){\right)}
\def\mand{\qquad\mbox{and}\qquad}
 \numberwithin{dummy}{section}
\begin{document}

\title[Counting free tuples in $\SL_2(\mathbb{Z})$]{Counting embeddings of free groups into 
$\SL_2(\mathbb{Z})$ and its subgroups}

\author[K. Bulinski] {Kamil Bulinski}
\address{School of Mathematics and Statistics, University of New South Wales, Sydney NSW 2052, Australia}
\email{kamil.bulinski@gmail.com}

\author[A. Ostafe] {Alina Ostafe}
\address{School of Mathematics and Statistics, University of New South Wales, Sydney NSW 2052, Australia}
\email{alina.ostafe@unsw.edu.au}

\author[I. E. Shparlinski] {Igor E. Shparlinski}
\address{School of Mathematics and Statistics, University of New South Wales, Sydney NSW 2052, Australia}
\email{igor.shparlinski@unsw.edu.au}

\begin{abstract}  We show that if one selects uniformly independently and identically distributed matrices  $A_1, \ldots, A_s \in \SL_2(\Z)$ from a ball of large radius $X$ then with probability at least $1 - X^{-1 + o(1)}$ the matrices  $A_1, \ldots, A_s$ are free generators for a free subgroup of $\SL_2(\bZ)$.  Furthermore, to show the flexibility of our method we do similar counting for matrices from the congruence subgroup $\Gamma_0(Q)$ uniformly with respect to the positive integer 
$Q\le X$.
This improves and generalises a result of E.~Fuchs and I.~Rivin (2017) which claims that the probability is $1 + o(1)$. We also disprove one of the statements in their work that has been used to deduce their claim.
 \end{abstract}

\subjclass[2020]{11C20, 15B36, 15B52}

\keywords{Free group, $\SL_2(\mathbb{Z})$ matrices}

\maketitle

\tableofcontents

\section{Introduction}
\subsection{Motivation and background} We are interested in counting $s$-tuples $(A_1, \ldots, A_s)$ of  $\SL_2(\mathbb{Z})$ matrices of bounded size such that $A_1, \ldots, A_s$ do not form a free generating set for a free subgroup of $\SL_2(\mathbb{Z})$. 

More precisely, 
for 
\begin{equation}
\label{eq:Matr A}
A =  \begin{bmatrix}
    a   & b \\
    c  & d \\
\end{bmatrix} \in\SL_2(\bZ)
\end{equation} 
we define its naive height as 
\[
\|A\| = \max\{|a|,|b|,|c|,|d|\}.
\] 
For $X>0$, we denote
\[
\SL_2(\Z;X)=\{A \in \SL_2(\Z) :~\|A\|\le X\}.
\]
For an integer $s\ge 1$ we define $N_s(X)$ as the number of non-free 
$s$-tuples 
\begin{equation}
\label{eq:s-tuple}
(A_1, \ldots, A_s) \in \SL_2(\Z;X)^s,
\end{equation}
that is, as the number of tuples~\eqref{eq:s-tuple}
 such that  for some $m \ge 1$,  $1 \le \nu_1 , \ldots, \nu_m \le s$ with $A_{\nu_i} \ne A_{\nu_{i+1}}^{-1}$, 
$i=1, \ldots, s-1$, we have 
\[
A_{\nu_1} \ldots  A_{\nu_m}= I_2,
\]
where, as usual,  $I_2 \in \SL_2(\bZ)$ denotes the identity matrix.

We recall that Fuchs and Rivin~\cite[Theorem~2.1]{FuRi} claim that for $s=2$ 
the group generated by a random pair $(A_1, A_2) \in \SL_2(\Z;X)^2$ 
 is free with 
probability $1 + o(1)$ as $X \to \infty$ and  in fact is  ``thin'' (that is, of 
infinite index in $\SL_2(\Z)$), which is equivalent to the bound
\[
N_2(X) = o( X^{4}).
\]  
Unfortunately the proof is based on~\cite[Lemma~2.5]{FuRi}, which seems to be false, see Section~\ref{sec:FR-wrong}.

Here we  count ``non-free'' $s$-tuples~\eqref{eq:s-tuple} for an arbitrary $s \ge 1$ and 
obtain a right bound on $N_s(X)$ with a power saving, see Corollary~\ref{cor:Nonfree SL2 Matr}.
In fact, to show the flexibility of our approach, we consider a more general question and count 
non-free $s$-tuples with matrices from the congruence subgroup 
\[
\Gamma_0(Q) = \left\{  \begin{bmatrix}
    a   & b \\
    c  & d \\
\end{bmatrix} \in\SL_2(\Z):~c \equiv 0 \pmod Q\right\}
\]
for an integer $Q\ge 1$. 
Furthermore, let 
$$
\Gamma_0(Q,X)= \Gamma_0(Q)\cap \SL_2(\Z;X).
$$
Then,  we define $N_s(Q,X)$ as the number of non-free 
$s$-tuples 
\begin{equation}
\label{eq:s-tuple G}
(A_1, \ldots, A_s)   \in \Gamma_0(Q,X)^s.
\end{equation}

\begin{thm}  
\label{thm:Nonfree Matr G}
For $X \to \infty$ we have 
\[
 N_s(Q,X)\le X^{2s -1+o(1)} Q^{-s+1}
\] 
\end{thm}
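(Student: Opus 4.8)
Here is a proof plan.

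\medskip

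\noindent\emph{Strategy; the base case $s=1$.} The plan is to induct on $s$, bounding $N_s(Q,X)$ through the combinatorics of a shortest nontrivial relation satisfied by a non-free tuple. For $s=1$, $N_1(Q,X)$ counts the matrices $A\in\Gamma_0(Q,X)$ of finite order; any such $A$ has $\Tr(A)\in\{-2,-1,0,1,2\}$, with $|\Tr(A)|=2$ only for $A=\pm I_2$. Fixing $t=\Tr(A)$ and writing $A=\begin{bmatrix}a & b\\ c & d\end{bmatrix}$ with $d=t-a$ and $Q\mid c$, the determinant relation reads $bc=a(t-a)-1$, whose right-hand side is a nonzero integer of size $O(X^2)$; summing over $|a|\le X$ and over divisors $c$ of $a(t-a)-1$ gives $N_1(Q,X)\le\sum_{|a|\le X}\tau\big(a(t-a)-1\big)=X^{1+o(1)}$, which is the claimed bound for $s=1$.

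\medskip

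\noindent\emph{Reduction to relations using every generator.} Let $(A_1,\dots,A_s)$ be non-free and fix a reduced relation $w=y_1\cdots y_m$ with $y_j\in\{A_i^{\pm1}\}$ of minimal length $m$; replacing $w$ by a cyclic conjugate, we may assume it is cyclically reduced. If some generator $A_{i_0}$ does not occur in $w$, then the remaining $(s-1)$-tuple is itself non-free while $A_{i_0}$ is arbitrary, so by the inductive hypothesis together with the estimate $|\Gamma_0(Q,X)|=X^{2+o(1)}Q^{-1}$ the number of such tuples is at most $s\,N_{s-1}(Q,X)\,|\Gamma_0(Q,X)|\le X^{2s-1+o(1)}Q^{-s+1}$. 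Hence it suffices to bound the tuples whose minimal relation involves every generator.

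\medskip

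\noindent\emph{The main case.} Fix a cyclically reduced word $w$ of length $m$ using all $s$ letters, and set $M_s(w)=\#\{(A_1,\dots,A_s)\in\Gamma_0(Q,X)^s:w(A_1,\dots,A_s)=I_2\}$. Form the partial products $P_0=I_2,P_1,\dots,P_m=I_2$. Minimality forces $P_0,\dots,P_{m-1}$ to be pairwise distinct --- a coincidence $P_i=P_j$ with $i<j$ would give the shorter relation $y_{i+1}\cdots y_j=I_2$ --- and since $P_j$ and $P_j^{-1}=y_{j+1}\cdots y_m$ are products of $\min(j,m-j)$ of the matrices $A_i^{\pm1}$, each of height $\le X$, one gets $\|P_j\|\le(2X)^{\min(j,m-j)}$. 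Because $A_i^{\pm1}=P_{j-1}^{-1}P_j$ whenever the $j$-th letter of $w$ is $x_i^{\pm1}$ and every letter occurs, the tuple $(A_1,\dots,A_s)$ is recovered from the sequence $(P_1,\dots,P_{m-1})$; hence $M_s(w)$ is at most the number of closed, non-backtracking ($P_{j+1}\ne P_{j-1}$), pairwise-distinct walks $P_0,\dots,P_{m-1}$ in $\SL_2(\Z)$ with $\|P_j\|\le(2X)^{\min(j,m-j)}$ and all steps $P_{j-1}^{-1}P_j$ lying in $\Gamma_0(Q,X)$. To count these one fixes the two ``ends'' $P_1,\dots,P_k$ and $P_{m-k},\dots,P_{m-1}$ for a large absolute constant $k$, where the height bounds leave only $X^{O(1)}$ possibilities, and then treats the middle Diophantinely: the matrix identities $P_{j-1}^{-1}P_j=A_i^{\pm1}$, the memberships $A_i\in\Gamma_0(Q,X)$, and the closure $P_m=I_2$ form a system of integer equations whose determinant-one consequences are automatic, and resolving them converts the residual freedom --- exactly as in the case $s=1$ --- into divisor sums, giving $M_s(w)\le X^{2s-1+o(1)}Q^{-s+1}$. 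Finally one sums over $w$: a fixed walk is compatible with only $O_s(1)$ words of length $m$, since its at most $2s$ distinct step-matrices must be labelled by the $s$ generators.

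\medskip

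\noindent\emph{The main obstacle.} The crux --- and precisely the point where the argument of Fuchs and Rivin via \cite[Lemma~2.5]{FuRi} breaks down --- is the contribution of \emph{long} relations: the number of closed non-backtracking walks of length $m$ can a priori grow like $(2s)^m$, so one must extract, uniformly in $m$, enough savings from the height constraints, the closure relation and the $\SL_2(\Z)$ integrality both to defeat this growth and to land on the exponent $2s-1$. One wants either a statement of the shape ``a non-free tuple in $\SL_2(\Z;X)^s$ admits a relation of length $O_s(\log X)$ with a small enough implied constant'', which reduces the word-sum to $X^{o(1)}$ effective terms, or a direct bound ``the admissible length-$m$ walk data number at most $X^{2s-1+o(1)}Q^{-s+1}c^{-m}$ for some $c>1$'', exploiting that the profile $j\mapsto\log\|P_j\|$ must rise from $0$ and return to $0$, so each of its two near-end descents costs a fixed multiplicative factor. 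I expect the latter to be the robust route, the uniform-in-$(m,Q)$ divisor bookkeeping being the bulk of the technical work.
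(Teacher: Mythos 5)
Your plan has a genuine gap at exactly the point you flag as ``the main obstacle'', and it is not a minor technicality: it is the whole difficulty of the problem. A non-free tuple only promises \emph{some} nontrivial reduced relation, of length $m$ that is a priori unbounded in terms of $X$, and the number of cyclically reduced words of length $m$ in $s$ letters grows like $(2s-1)^m$. Your proposed count of the walk data does not actually produce the needed saving: the height bound $\|P_j\|\le(2X)^{\min(j,m-j)}$ is exponentially large for $j$ near $m/2$, fixing $O(1)$ partial products at each end controls nothing in the middle, and the assertion that the remaining system ``converts into divisor sums, giving $M_s(w)\le X^{2s-1+o(1)}Q^{-s+1}$'' is stated, not proved. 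Neither of the two rescue statements you propose (a relation of length $O_s(\log X)$, or a per-word bound with a factor $c^{-m}$) is established, so the argument does not close. This word-by-word accounting is essentially the route on which the Fuchs--Rivin argument founders, and the paper's point is to avoid it.

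The paper instead never touches relations at all. It proves a ping-pong lemma: associate to each $A=\begin{bmatrix}a&b\\c&d\end{bmatrix}$ with $c\ne 0$ the half-disk $\Bsf(a/c,1/|c|)$ and to $A^{-1}$ the half-disk $\Bsf(-d/c,1/|c|)$; if for every pair $(A_i,A_j)$ the four closed disks attached to $A_i^{\pm1},A_j^{\pm1}$ are pairwise disjoint, then the tuple is free (an induction on the length of a word pushes any point into the disk of the last letter, so no reduced word is the identity). Hence every non-free tuple contains a pair violating a purely \emph{local, pairwise} Diophantine condition --- either $|\Tr(A_i)|\le 2$, or an inequality of the form $|a_1/c_1-a_2/c_2|\le 1/|c_1|+1/|c_2|$ (and its variants with $-d_i/c_i$). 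These failures are counted directly by dyadic decomposition in $\min\{|c_1|,|c_2|\}$ and the divisor bound, giving at most $X^{3+o(1)}Q^{-1}$ bad pairs, and the theorem follows by multiplying by $(\#\Gamma_0(Q,X))^{s-2}$. If you want to complete a proof along the paper's lines, the key idea you are missing is precisely this reduction of freeness to a pairwise disk-disjointness criterion; your divisor-sum machinery (which is sound, and is what the paper also uses in Lemmas~\ref{lemma: c_1 large}--\ref{lem: trace count}) then applies to a \emph{bounded} system of conditions rather than to relations of unbounded length.
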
  

It is useful to recall that by a classical result of Newman~\cite{New}, the total number of $s$-tuples~\eqref{eq:s-tuple} 
is of order $X^{2s}$.  However, it is not difficult to show that for $Q\le X$  we have 
\begin{equation}
\label{eq: Size G0}
\# \Gamma_0(Q, X) = X^{2+o(1)} Q^{-1}
\end{equation} 
(see our discussion in  Section~\ref{sec: asymptotic Gamma} and especially the lower bound~\eqref{eq:LB G0}). 
In particular the bound of  Theorem~\ref{thm:Nonfree Matr G} can be written as 
$$
 N_s(Q,X)\le  \(\# \Gamma_0(Q, X)\)^s (X/Q)^{-1+ o(1)} . 
$$

Using that $ N_s(X) =  N_s(1,X)$, we show that Theorem~\ref{thm:Nonfree Matr G} 
combined with a result of Eskin, Mozes and Shah give matching upper and lower
bounds on $N_s(X)$. 

\begin{cor}  
\label{cor:Nonfree SL2 Matr}
For $X \to \infty$ we have 
\[
X^{2s -1} \ll  N_s(X)\le X^{2s -1+o(1)}.
\]
\end{cor}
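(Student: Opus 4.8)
\emph{Proof strategy.} The upper bound is immediate: since $N_s(X)=N_s(1,X)$, Theorem~\ref{thm:Nonfree Matr G} with $Q=1$ already gives $N_s(X)\le X^{2s-1+o(1)}$. The whole content is therefore the lower bound $N_s(X)\gg X^{2s-1}$, and the plan is to exhibit a family of non-free $s$-tuples of this size. The mechanism is that a single coordinate of finite order already forces non-freeness: if $A\in\SL_2(\Z)$ has $\Tr A=-1$, then its characteristic polynomial $T^2+T+1$ yields $A^2+A+I_2=0$, hence $A^3=I_2$ with $A\ne I_2$ and $A\ne A^{-1}=A^2$; thus, taking $m=3$ and $\nu_1=\nu_2=\nu_3$ equal to that coordinate, $A^3=I_2$ is a nontrivial relation satisfying $A_{\nu_i}\ne A_{\nu_{i+1}}^{-1}$. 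Consequently, for any such $A$ and any $A_2,\dots,A_s\in\SL_2(\Z;X)$, the tuple $(A,A_2,\dots,A_s)$ is non-free.

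Writing $\cE(X)=\{A\in\SL_2(\Z;X):\ \Tr A=-1\}$, the previous paragraph gives
\[
N_s(X)\ \ge\ \#\cE(X)\cdot\bigl(\#\SL_2(\Z;X)\bigr)^{s-1},
\]
and by Newman~\cite{New} (equivalently, \eqref{eq: Size G0} with $Q=1$) the second factor is $\asymp X^{2(s-1)}$. Everything thus reduces to the bound $\#\cE(X)\gg X$, which is exactly where a counting result of Eskin, Mozes and Shah for integral points of bounded height on the affine symmetric variety $\{g\in\SL_2:\ \Tr g=-1\}$ enters, giving in fact $\#\cE(X)\sim cX$ for some $c>0$. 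Only the lower bound is needed, and it can also be seen directly: fix $g_0=\begin{bmatrix}0&-1\\ 1&-1\end{bmatrix}\in\cE(1)$, whose centraliser in $\SL_2(\Z)$ (the unit group of the order $\Z[g_0]$ in $\Q(\sqrt{-3})$) is cyclic of order $6$; for every $h\in\SL_2(\Z)$ with $\|h\|\le\tfrac12\sqrt X$ one has $hg_0h^{-1}\in\cE(X)$ because $\|hg_0h^{-1}\|\le4\|h\|^2$ (using $\|h^{-1}\|=\|h\|$ for $h\in\SL_2$ and $\|MN\|\le2\|M\|\,\|N\|$ for $2\times2$ matrices), and the map $h\mapsto hg_0h^{-1}$ is at most $6$-to-$1$; since there are $\gg X$ admissible $h$ by Newman~\cite{New}, we get $\#\cE(X)\gg X$. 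Combining with the display completes the argument.

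The only genuinely new ingredient here is this lower bound, and the single point that must be got right is the choice of the relation-forcing family: using one \emph{fixed} finite-order matrix contributes only $\asymp X^{2(s-1)}$ tuples, so one really needs the full conjugacy class of order-$3$ elements (of size $\asymp X$) — the Eskin--Mozes--Shah count, or the elementary conjugation estimate above, supplies precisely the missing power of $X$. Order-$4$ (trace $0$) or order-$6$ (trace $1$) elements would serve equally well.
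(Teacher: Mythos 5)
Your proposal is correct and takes essentially the same route as the paper: the lower bound comes from the $\gg X$ matrices in $\SL_2(\Z;X)$ with characteristic polynomial $\Phi_3(T)=T^2+T+1$ (hence of order $3$), counted via Eskin--Mozes--Shah, combined with arbitrary choices of the remaining $s-1$ coordinates. Your additional checks (that $A\ne A^{-1}$ so the word $A\cdot A\cdot A$ is admissible under the paper's definition) and your elementary conjugation argument replacing the Eskin--Mozes--Shah input are both sound, and the latter makes the lower bound self-contained, but the underlying strategy is identical.
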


We also use this  as an opportunity  to give, in Section~\ref{sec:prelim},  a concise and self-contained review of the ping pong method considered by Fuchs-Rivin in \cite{FuRi} that gives a sufficient condition for a tuple of matrices in $\SL_2(\bZ)$ to be free. 


\subsection{Notation and conventions}  
We recall that  the notations $U = O(V)$, $U \ll V$ and $ V\gg U$  
are equivalent to $|U|\leqslant c V$ for some positive constant $c$, 
which throughout this work, are absolute.

We also write $f(X) = o(g(X))$ if for all $\varepsilon>0$ there exists $X_{\varepsilon}>0$ so that $|f(X)| \leq \varepsilon |g(X)|$ for all $X > X_{\varepsilon}$.

For an integer $k\ne 0$ we denote by $\tau(k)$ the number of integer positive divisors of $k$, 
for which we   use the well-known bound
\begin{equation}
\label{eq:tau}
\tau(k) = |k|^{o(1)} 
\end{equation}
as $|k| \to \infty$, see~\cite[Theorem~317]{HaWr}.

We always assume that the matrix $A$ is   as in~\eqref{eq:Matr A}. Since there are only $O(X)$ matrices  in  $\SL_2(\Z; X)$ 
with $c=0$ (namely, with $a=d= \pm 1$ and $|b|\le X$), we see from~\eqref{eq: Size G0}  that there are at most 
\[
X  \(X^{2+o(1)} Q^{-1}\)^{s-1} = X^{2s -1+o(1)} Q^{-s+1}
\]
$s$-tuples~\eqref{eq:s-tuple G} with at least one matrix $A_i$ having $c_i = 0$. 
Hence from now on  we simply discard these from our argument and always assume that 
\[
c \ne 0.
\] 
We also apply the same convention to indexed matrices $A_i$, which then have entries $a_i, b_i, c_i, d_i$.  
More formally we always work with matrices from the set 
$$
\Gamma_0^*(Q,X)=\{A\in \Gamma_0(Q,X):~ c\ne 0\}. 
$$

\section{Preliminaries on the ping pong method}
\label{sec:prelim}

\subsection{Actions of $\SL_2(\Z)$ matrices on the upper half-plane} 
 Let $$\bH = \{ x+yi \in \bC :~ x,y \in \bR, y > 0\}.$$ For $A\in \SL_2(\Z)$  and $z \in \bH$, we define the action of $A$ on $z$ in 
a standard way as  the {\it M{\"o}bius map\/} 
\begin{equation}
\label{eq:Mobius}
z \mapsto Az = \frac{az+b}{cz+d} \in \bH.
\end{equation}

We let 
\begin{align*}
&\Bsf(z_0,r) = \{ z \in \bH:~|z - z_0| < r \}, \\
&\Esf(z_0,r) = \{z \in \bH:~ |z-z_0|>r\}.
\end{align*}

We recall our convention  that throughout we assume that $A$ is a matrix of the form~\eqref{eq:Matr A} with $c \ne 0$
and similarly for matrices $A_1$ and $A_2$.

\begin{lemma} 
\label{lem:EtoB} For any $A\in \SL_2(\Z)$ the following hold. 
\begin{enumerate}
	\item[(i)] For any $z \in \Esf\left(\frac{-d}{c},\frac{1}{|c|}\right)$ we have that $Az \in \Bsf\left(\frac{a}{c}, \frac{1}{|c|}\right) $.
	\item[(ii)]  For any $z \in \Esf\left(\frac{a}{c}, \frac{1}{|c|} \right)$ we have that $A^{-1}z \in \Bsf\left(\frac{-d}{c}, \frac{1}{|c|}\right)$.
\end{enumerate}
\end{lemma}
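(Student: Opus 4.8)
The plan is to verify this by a direct computation using the explicit form of the Möbius action \eqref{eq:Mobius}. The key observation is that the Möbius map associated to $A$ can be decomposed through the denominator $cz+d$: writing $w = cz + d$, one has $Az = \frac{a}{c} - \frac{1}{c(cz+d)} = \frac{a}{c} - \frac{1}{cw}$, using $ad - bc = 1$. This identity is what links the ``exterior'' disc centered at $-d/c$ (which is exactly where $|cz+d| = |c|\cdot|z+d/c| = |c|^2 \cdot |z - (-d/c)|$ is large) to the ``interior'' disc centered at $a/c$.

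First I would establish part (i). Suppose $z \in \Esf(-d/c, 1/|c|)$, i.e. $|z - (-d/c)| > 1/|c|$. Then $|cz + d| = |c| \cdot |z + d/c| > |c| \cdot (1/|c|) = 1$, so $|cz+d| > 1$. Using the identity $Az - \frac{a}{c} = -\frac{1}{c(cz+d)}$, we get
\[
\left| Az - \frac{a}{c} \right| = \frac{1}{|c| \cdot |cz+d|} < \frac{1}{|c|},
\]
which is exactly the statement that $Az \in \Bsf(a/c, 1/|c|)$. (One should also note $Az \in \bH$, which is standard since $A \in \SL_2(\Z)$ preserves the upper half-plane.)

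For part (ii), I would apply part (i) to the matrix $A^{-1} = \begin{bmatrix} d & -b \\ -c & a \end{bmatrix}$. Its lower-left entry is $-c \ne 0$, its ratio ``$-d/c$'' in the role of the center becomes $-a/(-c) = a/c$, and its ratio ``$a/c$'' becomes $d/(-c) = -d/c$. Thus part (i) applied to $A^{-1}$ says: for $z \in \Esf(a/c, 1/|{-c}|) = \Esf(a/c, 1/|c|)$ we have $A^{-1} z \in \Bsf(-d/c, 1/|{-c}|) = \Bsf(-d/c, 1/|c|)$, which is precisely the claim. I do not anticipate a genuine obstacle here; the only thing to be careful about is bookkeeping the entries of $A^{-1}$ correctly and the elementary fact that the action preserves $\bH$, so that the balls $\Bsf$ and $\Esf$ (defined as subsets of $\bH$) are the correct codomains.
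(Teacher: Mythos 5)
Your proof is correct and follows essentially the same route as the paper: a direct computation of $Az - a/c$ using $ad-bc=1$ and $|cz+d|>1$, followed by deducing (ii) from (i) via the explicit form of $A^{-1}$. In fact your version is marginally tidier, since you keep the inequality strict ($<1/|c|$), which is what membership in the open ball $\Bsf$ actually requires.
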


\begin{proof} Assume $z \in \Esf\left(-d/c,1/|c|\right)$, thus $|cz+d| > 1$. Now 
\[\left | \frac{az+b}{cz+d} - \frac{a}{c} \right | = \frac{|bc-ad|}{|(cz+d)c|} = \frac{1}{|cz+d||c|} \leq \frac{1}{|c|}\]
 so indeed $Az \in \Bsf\left(\frac{a}{c}, \frac{1}{|c|}\right)$. 

The second claim follows from the first by swapping $A$ with 
 \[A^{-1} =  \begin{bmatrix}
    d   & -b \\
    -c  & a \\
\end{bmatrix}\] 
so we have the following swaps
\[\frac{-d}{c} \ \longleftrightarrow \ \frac{-a}{-c}  \mand  \frac{a}{c} \ \longleftrightarrow \ \frac{d}{-c},
\]
and the result follows.  \end{proof}

For $A$  as in~\eqref{eq:Matr A}  we define the open half-disk 
\[\cD (A) = \Bsf\left(\frac{a}{c}, \frac{1}{|c|}\right). \]
 Observe that 
 \[
 \cD\(A^{-1}\) = \Bsf\left(\frac{-d}{c}, \frac{1}{|c|}\right).
 \]
 
 We are also interested in the closures $\overline{\cD(A)}$ and $\overline{\cD\(A^{-1}\)}$. 

\begin{cor} 
\label{cor:Discs} 
If for $A \in \SL_2(\Z)$ we have $|\Tr(A)|>2$ then $\cD(A) \cap \cD\(A^{-1}\) = \emptyset$ and  for $z \notin \overline{\cD\(A^{-1}\)}$ we have $Az \in \cD(A)$.
\end{cor}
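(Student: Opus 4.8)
The plan is to treat the two assertions of Corollary~\ref{cor:Discs} separately, since each becomes elementary once the definitions are unwound.

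For the disjointness $\cD(A) \cap \cD(A^{-1}) = \emptyset$, I would use that $\cD(A) = \Bsf(a/c, 1/|c|)$ and $\cD(A^{-1}) = \Bsf(-d/c, 1/|c|)$ are half-disks in $\bH$ with \emph{real} centres $a/c$ and $-d/c$ and common radius $1/|c|$. Two such half-disks are disjoint precisely when the distance between their centres is at least the sum of the radii, i.e. when
\[
\left| \frac{a}{c} - \frac{-d}{c}\right| = \frac{|a+d|}{|c|} = \frac{|\Tr(A)|}{|c|} \geq \frac{2}{|c|}.
\]
Since $\Tr(A) \in \bZ$, the hypothesis $|\Tr(A)| > 2$ forces $|\Tr(A)| \geq 3$, so in fact the centres lie at distance $\geq 3/|c| > 2/|c|$; hence not only $\cD(A)$ and $\cD(A^{-1})$ but even their closures are disjoint.

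For the second assertion, I would unwind the definition of $\overline{\cD(A^{-1})}$: for $z \in \bH$, membership $z \in \overline{\cD(A^{-1})}$ amounts to $|z + d/c| \leq 1/|c|$, so $z \notin \overline{\cD(A^{-1})}$ is exactly the condition $|z + d/c| > 1/|c|$, that is, $z \in \Esf(-d/c, 1/|c|)$. Then Lemma~\ref{lem:EtoB}(i) applies verbatim and yields $Az \in \Bsf(a/c, 1/|c|) = \cD(A)$, which is precisely the claim.

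There is no real obstacle here: the corollary is essentially Lemma~\ref{lem:EtoB}(i) repackaged together with a one-line distance computation. The only points requiring a moment's care are (a) using the integrality of $\Tr(A)$ to upgrade $|\Tr(A)| > 2$ into strict separation of the \emph{closed} half-disks, and (b) matching strict versus non-strict inequalities when identifying the complement of the closed disk $\overline{\cD(A^{-1})}$ with the open exterior region $\Esf(-d/c, 1/|c|)$, so that the hypothesis $|cz+d| > 1$ underlying Lemma~\ref{lem:EtoB} is indeed met.
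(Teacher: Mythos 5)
Your proof is correct and follows essentially the same route as the paper's: the disjointness comes from comparing the distance $|\Tr(A)|/|c|$ between the real centres with the sum $2/|c|$ of the radii, and the mapping statement is just Lemma~\ref{lem:EtoB}(i) after identifying the complement of $\overline{\cD(A^{-1})}$ with $\Esf(-d/c,1/|c|)$. Your extra observation that integrality of $\Tr(A)$ even separates the closed half-disks is a harmless bonus not needed for the statement.
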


\begin{proof} The distance between $a/c$ and $-d/c$ is 
\[|a/c+d/c| = \frac{|\Tr(A)|}{c} > \frac{2}{c}
\] 
and so  $\cD(A)$ and $\cD\(A^{-1}\)$  are disjoint. Now if $z \notin \overline{\cD(A^{-1})}$ then 
$z \in \Esf\left(-d/c,1/|c|\right)$ and so $Az \in \cD(A)$ by   Lemma~\ref{lem:EtoB}.
\end{proof}

\subsection{Ping pong pairs} 

Corollary~\ref{cor:Discs} motivates the following definition.

\begin{defn} A pair of matrices $A_1,A_2 \in \operatorname{SL}_2(\bZ)$ is said to be a \textit{ping pong pair} if the four disks 
$\overline{\cD(A_1)}$, $\overline{\cD(A_1^{-1})}$, $ \overline{\cD(A_2)}$, $ \overline{\cD(A_2^{-1})}$ are all pairwise disjoint.
\end{defn}

We now have one of our main tools. 

\begin{lem} 
\label{lem:ping pong} If every pair $(A_i,A_j)$ in an $s$-tuple~\eqref{eq:s-tuple} 
 is a ping pong pair, then this $s$-tuple is free. 
\end{lem}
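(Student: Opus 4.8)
The plan is to use the classical ping pong lemma (the table-tennis lemma of Klein/Tits) applied to the M\"obius action of $\SL_2(\bZ)$ on $\bH$, with the collection of disjoint closed disks $\overline{\cD(A_i^{\pm 1})}$ as the ``tables''. First I would observe that since every pair $(A_i,A_j)$ is a ping pong pair, in particular each $(A_i,A_i)$ being a ping pong pair forces $\overline{\cD(A_i)}$ and $\overline{\cD(A_i^{-1})}$ to be disjoint; by Corollary~\ref{cor:Discs} (or rather its contrapositive together with the trace computation there) this is exactly the condition $|\Tr(A_i)|>2$, so each $A_i$ is a hyperbolic element of infinite order. Moreover, taking all the pairwise disjointness conditions together, the $2s$ closed disks $\overline{\cD(A_1)},\overline{\cD(A_1^{-1})},\ldots,\overline{\cD(A_s)},\overline{\cD(A_s^{-1})}$ are pairwise disjoint.

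Next I would set up the standard ping pong bookkeeping. For each $i$ let $X_i^+ = \cD(A_i)$ and $X_i^- = \cD(A_i^{-1})$, and pick a base point $z_0 \in \bH$ lying outside all $2s$ of the closed disks (possible since they are bounded away from, say, a point high up the imaginary axis, or from any point not in their union). The key dynamical input is Corollary~\ref{cor:Discs}: for any $i$, if $z \notin \overline{\cD(A_i^{-1})}$ then $A_i z \in \cD(A_i)$, and symmetrically (applying the corollary to $A_i^{-1}$, noting $\cD((A_i^{-1})^{-1}) = \cD(A_i)$) if $z \notin \overline{\cD(A_i)}$ then $A_i^{-1} z \in \cD(A_i^{-1})$. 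In other words, writing $Y_i^{\varepsilon}$ for the complement of $\overline{X_i^{-\varepsilon}}$, we have $A_i^{\varepsilon}\bigl(\bH \setminus \overline{X_i^{-\varepsilon}}\bigr) \subseteq X_i^{\varepsilon}$ for $\varepsilon \in \{+,-\}$.

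Then I would run the standard argument: suppose $w = A_{\nu_1}^{\varepsilon_1}\cdots A_{\nu_m}^{\varepsilon_m}$ is a nonempty reduced word, meaning that we never have $\nu_k = \nu_{k+1}$ with $\varepsilon_k = -\varepsilon_{k+1}$ — this is precisely the condition $A_{\nu_k} \neq A_{\nu_{k+1}}^{-1}$ from the definition of non-free in~\eqref{eq:s-tuple}. I claim $w \neq I_2$. If $\nu_1 = \nu_m$ and $\varepsilon_1 = \varepsilon_m$, apply $w$ to $z_0$: reading right to left, $A_{\nu_m}^{\varepsilon_m} z_0 \in X_{\nu_m}^{\varepsilon_m}$ since $z_0 \notin \overline{X_{\nu_m}^{-\varepsilon_m}}$; then inductively each further letter $A_{\nu_k}^{\varepsilon_k}$ is applied to a point in $X_{\nu_{k+1}}^{\varepsilon_{k+1}}$, and because the word is reduced this point is not in $\overline{X_{\nu_k}^{-\varepsilon_k}}$ (the only disk it could belong to that is forbidden would be $\overline{X_{\nu_k}^{-\varepsilon_k}}$, and $X_{\nu_{k+1}}^{\varepsilon_{k+1}} = X_{\nu_k}^{-\varepsilon_k}$ would force $\nu_{k+1}=\nu_k$, $\varepsilon_{k+1} = -\varepsilon_k$, contradicting reducedness; disjointness of all disks handles the other cases). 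Hence $w z_0 \in X_{\nu_1}^{\varepsilon_1}$, but $z_0 \notin X_{\nu_1}^{\varepsilon_1}$, so $w z_0 \neq z_0$ and $w \neq I_2$. The general reduced word is handled by the usual trick of conjugating by a suitable power of one of the generators (or a generator not appearing at the ends) to reduce to the cyclically reduced case; since $s \geq 1$ and we may always choose an auxiliary letter, this poses no difficulty. Therefore no nontrivial reduced word is the identity, which is exactly the statement that the $s$-tuple is free.

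The main obstacle — really the only point requiring care rather than routine verification — is the combinatorial step ensuring that at each stage of the word the running point stays outside the forbidden closed disk $\overline{X_{\nu_k}^{-\varepsilon_k}}$, so that Corollary~\ref{cor:Discs} applies. This is where both the full pairwise disjointness of all $2s$ disks (not just the disjointness within each pair) and the reducedness hypothesis $A_{\nu_k} \neq A_{\nu_{k+1}}^{-1}$ are used, and it is worth spelling out the induction cleanly; the conjugation trick to pass from cyclically reduced words to arbitrary ones should also be stated explicitly for completeness.
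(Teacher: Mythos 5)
Your proof is correct and follows essentially the same route as the paper: pick a base point $z_0$ outside all $2s$ closed disks and walk along the reduced word by induction, using Corollary~\ref{cor:Discs} at each step, with pairwise disjointness of the closed disks plus reducedness guaranteeing the running point never lies in the forbidden disk $\overline{\cD(A_{\nu_k}^{-\varepsilon_k})}$. The only (harmless) difference is your detour through cyclically reduced words and the conjugation trick, which is unnecessary here: since $z_0$ lies outside \emph{every} closed disk, the conclusion $wz_0 \in \cD(A_{\nu_1}^{\varepsilon_1})$ while $z_0 \notin \cD(A_{\nu_1}^{\varepsilon_1})$ already holds for an arbitrary reduced word, exactly as in the paper's argument.
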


\begin{proof} Suppose that $B = B_m \ldots B_1$,  where $B_\nu\in \{A_1, A_1^{-1}, \ldots, A_s, A_s^{-1}\}$ for all $\nu=1, \ldots, m$ are such that $B_i \neq B_{i+1}^{-1}$ for all $i=1, \ldots, m-1$. Now, choose $z_0 \in \bH$ such that 
\[
z_0 \notin \bigcup_{i=1}^s\(\overline{\cD(A_i)} \cup \overline{\cD\(A_i^{-1}\)}\) .
\]
Thus $B_1z_0 \in \cD(B_1)$  by Corollary~\ref{cor:Discs}  above. Now, let 
\[
z_m = Bz_0 = B_m \ldots B_1 z_0.
\]
 We prove by induction on $m$ that $z_m \in \cD(B_m)$.
 
For $m=1$ this has just been established.

Now, suppose $z_m \in \cD(B_m)$.  Since $B_{m+1}^{-1} \neq B_m$ we have that $z_m \notin \cD(B_{m+1}^{-1})$. So we may use Corollary~\ref{cor:Discs}  to conclude that $z_{m+1} = B_{m+1}z_m \in \cD(B_{m+1})$, which completes the induction step. Thus $Bz_0 = z_m \neq z_0$ as $z_0 \notin \cD(B_m)$, which means that $B\ne I_2$. \end{proof}

\section{Counting ping pong $s$-tuples}

\subsection{Some counting results for  matrices from $ \Gamma_0^*(Q,X)$}

We need the following simple upper bound
\begin{lemma} \label{lemma: c_1 large} Let $Q\le Y\le X$. The number of matrices $A \in \Gamma_0^*(Q,X)$ such that $ |c| \le Y$ is at most $X^{1+o(1)}YQ^{-1}$.
\end{lemma}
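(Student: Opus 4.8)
The plan is to parametrise matrices $A = \begin{bmatrix} a & b \\ c & d \end{bmatrix} \in \Gamma_0^*(Q,X)$ by first choosing the bottom row $(c,d)$ and then counting admissible top rows $(a,b)$. Since $A \in \Gamma_0(Q)$ we have $c = Qc'$ for some nonzero integer $c'$ with $|c'| \le Y/Q$, and $|d| \le X$, so there are at most $O((Y/Q+1) \cdot X) = X^{1+o(1)} Y Q^{-1}$ choices for the pair $(c,d)$ in this range (using $Q \le Y$ so that $Y/Q + 1 \ll Y/Q$ when $Y/Q \ge 1$, and noting $X^{o(1)}$ absorbs the edge case). The determinant condition $ad - bc = 1$ then reads $ad \equiv 1 \pmod{|c|}$, which already forces $\gcd(c,d) = 1$ and pins down $a$ modulo $|c|/\gcd(c,d) = |c|$; and once $a$ is fixed, $b = (ad-1)/c$ is determined. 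So for each fixed $(c,d)$ with $c \ne 0$, the number of valid $a$ with $|a| \le X$ is at most $\lfloor 2X/|c| \rfloor + 1 \ll X/|c| + 1$.

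First I would handle the main regime where $|c| \le X$, so $X/|c| \ge 1$ and the count of top rows is $\ll X/|c|$. Summing over the bottom rows, the total is
\[
\sum_{\substack{c = Qc',\ 1 \le |c'| \le Y/Q \\ |d| \le X,\ \gcd(c,d)=1}} \frac{X}{|c|}
\ll X \sum_{1 \le |c'| \le Y/Q} \frac{1}{Q|c'|} \sum_{|d| \le X} 1
\ll \frac{X^2}{Q} \sum_{1 \le |c'| \le Y/Q} \frac{1}{|c'|}
\ll \frac{X^2}{Q} \log(Y/Q+2).
\]
This already gives $X^{2+o(1)} Q^{-1}$, which is \emph{worse} than the claimed $X^{1+o(1)} Y Q^{-1}$ unless $Y \gg X$. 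So this naive bound on $d$ is too wasteful: the point is that for a fixed $c$ and a fixed residue of $a \pmod{|c|}$, the value of $d$ is \emph{also} constrained, because $ad \equiv 1 \pmod{|c|}$ ties $d$ to a residue class mod $|c|/\gcd(a,c)$. The cleaner route is to not fix the full bottom row first. Instead fix $c = Qc'$ (at most $Y/Q$ nonzero choices up to sign, so $\ll Y/Q$ choices of $c$), then observe that the remaining data $(a,b,d)$ with $ad - bc = 1$, $|a|,|d| \le X$, is counted by: choosing $a \bmod c$ coprime to $c$, then $d$ in its forced residue class mod $|c|$ (so $\ll X/|c| + 1 \ll X/|c|$ choices since $|c| \le X$ — here one uses $|c| \le Y \le X$), then $a$ itself ranges over $\ll X/|c|$ values, then $b$ is determined. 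That gives, for each fixed $c$, at most $|c| \cdot (X/|c|) \cdot \ldots$ — one must be careful to avoid double-counting the residue of $a$.

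The right bookkeeping: fix $c$, fix $d$ with $|d| \le X$ and $\gcd(c,d) = 1$ (at most $2X+1$ values, but we will recoup), then $a$ runs over one residue class mod $|c|$, giving $\ll X/|c|$ values, and $b$ is then determined; this is the count $\sum_{|d|\le X} (X/|c|) \ll X^2/|c|$ and summing $\sum_{c} X^2/|c| \ll (X^2/Q)\log(Y/Q)$ again. To actually reach $X^{1+o(1)}YQ^{-1}$ one cannot afford a free sum over all $|d| \le X$; the extra saving must come from the fact that, once $c$ and the residue class of $a$ mod $|c|$ are chosen, the congruence $ad \equiv 1 \pmod{|c|}$ forces $d$ into a \emph{single} residue class mod $|c|$ as well, so $d$ has only $\ll X/|c|$ choices, not $X$. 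Thus: number of $c$ is $\ll Y/Q$; number of residues $a \bmod |c|$ is $\le |c|$; for each, $d$ has $\ll X/|c|$ choices and then $a$ (lifting the residue to $|a|\le X$) has $\ll X/|c|$ choices and $b$ is determined — but this is $(Y/Q)\cdot |c| \cdot (X/|c|)^2 = (Y/Q) X^2/|c|$, still too big. The correct and economical statement is simply: for fixed $c$, the triples $(a,d)$ with $ad \equiv 1 \pmod{|c|}$, $|a|,|d|\le X$ number $\ll (X/|c|+1)^2 |c| \asymp X^2/|c|$ when $|c| \le X$; summing a genuinely convergent-in-the-right-way expression requires instead bounding by $\sum_{|c|\le Y,\, Q\mid c} (X^2/|c|)$ and accepting the logarithm, \emph{then} absorbing it into $X^{o(1)}$ — and observing that the bound we want, $X^{1+o(1)}YQ^{-1}$, when $Q \le Y \le X$ satisfies $X^{1+o(1)}YQ^{-1} \ge X^{1+o(1)}$, whereas $(X^2/Q)\log(Y/Q)$ can be as large as $X^{2+o(1)}/Q$, so these do \emph{not} match.

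\textbf{Where the real argument lies.} The resolution — and the step I expect to be the main obstacle — is that we must \emph{not} bound the number of $(a,b)$ per bottom row by $X/|c|$ but rather exploit that across all of $\Gamma_0^*(Q,X)$ the entry $c$ is at least... no: the genuine mechanism is that for a matrix in $\Gamma_0^*(Q,X)$ with $|c| \le Y$, the relation $ad - bc = 1$ with $|a|,|d| \le X$ and $|c| \ge Q$ means $|bc| = |ad - 1| \le X^2$, so $|b| \le X^2/|c| \le X^2/Q$; but also $|b| \le X$. The count of $(a,b,c,d)$ is then $\sum_{Q \le |c| \le Y} \#\{(a,d): |a|,|d|\le X,\ ad\equiv 1 \!\!\pmod{|c|}\}$, and the inner count is $\ll \tau(\text{something}) (X^2/|c|^2 \cdot |c|) = X^2/|c|$ only in the range $|c| \le X$ — giving $\sum_{Q\le|c|\le Y, Q\mid c} X^2/|c| \ll (X^2/Q)(1 + \log(Y/Q)) = X^{2+o(1)}Q^{-1}$ when $Y$ can be as small as $Q$. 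I therefore expect that the actual proof in the paper uses a \emph{different} decomposition: fix the \emph{first column} $(a,c)$, noting $\gcd(a,c)=1$, so the pair $(a,c)$ with $|a|\le X$, $Q \mid c$, $|c|\le Y$ has $\ll X \cdot Y/Q$ choices with $X^{o(1)}$ slack from coprimality bookkeeping; then $d$ is determined modulo $|c|$ by $ad \equiv 1 \pmod{|c|}$, hence $d$ has $\ll X/|c|+1$ choices, and $b = (ad-1)/c$ is then forced. This yields
\[
\#\{A\} \ll \sum_{\substack{Q \le |c| \le Y \\ Q \mid c}} \ \sum_{\substack{|a| \le X \\ \gcd(a,c)=1}} \left( \frac{X}{|c|} + 1 \right)
\ll \sum_{\substack{Q \le |c| \le Y \\ Q \mid c}} X \cdot \frac{X}{|c|}
\ll \frac{X^2}{Q}\,(1 + \log(Y/Q)),
\]
which is $X^{2+o(1)}Q^{-1}$ — and since $Y \le X$ forces $X^{2+o(1)}Q^{-1} \le X^{1+o(1)} Y Q^{-1}$ precisely when $Y \ge X$... this still does not close. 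Hence the genuinely necessary refinement is: bound the number of $(a,c)$ pairs by using that for the $d$-count to be nonempty we need the congruence solvable, and more importantly the ``$+1$'' term $\sum_c X$ contributes $\ll XY/Q$, while the main term $\sum_{Q \le |c| \le Y, Q\mid c} X^2/|c| \asymp (X^2/Q)\log(Y/Q)$ must be the dominant one — I believe the paper's statement implicitly restricts to, or the subsequent use only needs, the regime where this is harmless, or the bound is proved by summing $\sum_{|a|\le X}\#\{c: Q|c, |c|\le Y\}\cdot\#\{d \bmod |c|\}$ more carefully so that the $d$-count is genuinely $\ll X/|c|$ without a $+1$ (valid since $|c|\le Y\le X$). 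In that clean case the total is $\sum_{|a|\le X}\sum_{Q|c,|c|\le Y} X/|c| \ll X \cdot X \log(Y/Q)/Q = X^{2+o(1)}/Q$, and I would then note $X \le$ nothing helps. The honest summary: the main obstacle is getting the linear-in-$X$ rather than quadratic-in-$X$ dependence, which must come from fixing \emph{two} entries that are linked by the determinant relation (so only $\ll X \cdot (Y/Q)$ pairs survive the coprimality constraint \emph{with} $X^{o(1)}$ savings) and then showing the remaining two entries are each confined to a single residue class of modulus $\asymp |c| \gg Q$, forcing $\ll (X/Q)^{?}$ — I would carry this out by fixing $(b,d)$ or $(a,c)$, applying the divisor bound~\eqref{eq:tau} to control coprimality sums, and tracking that one further entry lies in an arithmetic progression of common difference $|c| \ge Q$, yielding the claimed $X^{1+o(1)} Y Q^{-1}$.
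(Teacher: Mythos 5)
There is a genuine gap: you never land on the decomposition that actually proves the lemma, and every decomposition you do carry out yields a bound of order $X^{2+o(1)}Q^{-1}$, which you correctly observe is too weak when $Y$ is much smaller than $X$. The missing idea is to fix the \emph{anti-diagonal} entries $b$ and $c$ first: there are $O(X)$ choices for $b$ and $O(YQ^{-1})$ choices for the nonzero multiple $c$ of $Q$ with $|c|\le Y$, hence $O(XYQ^{-1})$ pairs $(b,c)$. The determinant relation then fixes the \emph{product} $ad=1+bc$ completely — not merely a residue class of $a$ or $d$ modulo $|c|$ — so when $1+bc\ne 0$ the divisor bound~\eqref{eq:tau} gives only $X^{o(1)}$ admissible pairs $(a,d)$, and the total is $X^{1+o(1)}YQ^{-1}$ with no sum over residue classes and no logarithm from $\sum 1/|c|$. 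The degenerate case $1+bc=0$ (forcing $(b,c)=\pm(1,-1)$, hence $Q=1$ and $ad=0$) contributes only $O(X)$ matrices and is handled separately.

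Your closing paragraph gestures in the right direction — ``fixing two entries linked by the determinant relation'' with ``$X^{o(1)}$ savings'' from the divisor bound — but the specific pairs you name, $(b,d)$ and $(a,c)$, do not work: $(b,d)$ gives $O(X^2)$ choices outright, and fixing a column $(a,c)$ only pins the remaining entries to residue classes modulo $|c|$, which is exactly the $X/|c|$ loss that produces the $X^2/Q$ bounds you keep obtaining. The divisor bound is not used to ``control coprimality sums''; it is applied directly to the number of factorisations of the fixed nonzero integer $1+bc$. Since the one step you yourself identify as the main obstacle is resolved only by this specific choice of which two entries to fix, and you do not make that choice, the proposal does not establish the lemma.
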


\begin{proof}  
  If $bc+1 =0$ (which is possible only for $Q=1$) then we have at most two choices 
  for $(b,c) = \pm (1,-1)$ and since $ad=0$,  either $a= 0$ or $d=0$. Hence there are $O(X)$ 
  such matrices. 
  
 If $ad= 1 + bc\ne 0$, there are only at most $O(X)$ choices for $b$    and $O(YQ^{-1}) $ choices for $c$, thus $O(XYQ^{-1})$ choices for $bc+1$.  For each such $(b,c)$ pair, the number of choices for $(a,d)$ is $X^{o(1)}$ by~\eqref{eq:tau}. Altogether there are at most $X^{1+o(1)}YQ^{-1}$ such choices.
 \end{proof}
 
 We now count some special pairs of matrices $(A_1,A_2)$ for which $ \min\{ |c_1|, |c_2| \}$ falls in a dyadic interval. 

\begin{lemma}\label{lemma: estimate a_1/c_1} 
Let $Q\le Y\le X$. There are at most $X^{3+o(1)}Q^{-2} $ pairs of matrices $(A_1,A_2) \in \Gamma_0^*(Q,X)^2$ satisfying the following conditions:
\begin{enumerate}
	\item[(i)] $Y/2< \min\{ |c_1|, |c_2| \}\le Y$;
	\item[(ii)]  $\left| \frac{a_1}{c_1} - \frac{a_2}{c_2} \right| \leq \frac{1}{|c_1|} + \frac{1}{|c_2|}$.
\end{enumerate}
\end{lemma}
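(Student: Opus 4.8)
The plan is to bound the number of such pairs by summing over the possible values of $c_1$ and $c_2$, then over the rationals $a_1/c_1$, and finally count the remaining free entries. First I would fix $c_1$ and $c_2$ with $Y/2 < \min\{|c_1|,|c_2|\} \le Y$; without loss of generality say $|c_1| \le |c_2|$ (the other case is symmetric and costs only a factor of $2$), so $Y/2 < |c_1| \le Y$ and $|c_1| \le |c_2| \le X$, with both divisible by $Q$. Thus there are $O(YQ^{-1})$ choices for $c_1$ and $O(XQ^{-1})$ choices for $c_2$.

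Next, with $c_1, c_2$ fixed, I would count the admissible pairs $(a_1,a_2)$. Condition (ii) says that $a_1/c_1$ lies within $1/|c_1| + 1/|c_2| \le 2/|c_1|$ of $a_2/c_2$. For a fixed $a_2$ (there are $O(X)$ choices, since $|a_2| \le X$), the value $a_1$ must satisfy $|a_1 - (a_2/c_2)c_1| \le 2|c_1|/|c_1| = 2$, wait — more carefully, $|a_1 - c_1 a_2/c_2| \le |c_1|(1/|c_1| + 1/|c_2|) \le 2$, so $a_1$ ranges over an interval of length $O(1)$, giving $O(1)$ choices for $a_1$. Hence there are $O(X)$ admissible pairs $(a_1, a_2)$ for each fixed $(c_1, c_2)$. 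Finally, once $a_i$ and $c_i$ are fixed with $c_i \ne 0$, the relation $a_i d_i - b_i c_i = 1$ shows that $d_i$ is determined modulo $c_i$ by $b_i$, and for $|b_i| \le X$, $|d_i| \le X$ there are $X^{o(1)}$ valid pairs $(b_i, d_i)$ by the divisor bound~\eqref{eq:tau} applied to $b_i c_i + 1$ — actually more simply, $b_i$ ranges over $O(X)$ values and then $d_i$ is determined when $c_i \ne 0$, so there are $O(X)$ choices for $(b_i,d_i)$, hence $O(X^2)$ for both pairs.

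Multiplying the counts: $O(YQ^{-1})$ for $c_1$, times $O(XQ^{-1})$ for $c_2$, times $O(X)$ for $(a_1,a_2)$, times $O(X^2)$ for $(b_1,d_1,b_2,d_2)$ gives $O(X^4 Y Q^{-2})$, which since $Y \le X$ is $O(X^{5}Q^{-2})$ — this is worse than the claimed $X^{3+o(1)}Q^{-2}$, so I have overcounted somewhere and must be sharper. The fix: I should not count $b_i$ freely. Once $a_i, c_i$ are fixed, the pair $(b_i, d_i)$ satisfies $a_i d_i - b_i c_i = 1$; the solutions form a coset of $\{(kc_i/\gcd(a_i,c_i)\cdot(\ldots))\}$... the cleaner statement is that $(b_i,d_i)$ lies on a line, and within the box $[-X,X]^2$ the number of lattice points on that line is $O(X/\max\{|a_i|,|c_i|\} + 1) = O(X/|c_i| + 1)$ since $|c_i| > Y/2$ for $i = 1$ and we can also use $|c_2|$. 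So for $i=1$: $O(X/|c_1|+1) = O(X/Y + 1)$ choices for $(b_1,d_1)$, and similarly $O(X/|c_2|+1)$ for $(b_2,d_2)$.

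So the corrected multiplication is: sum over $c_1$ (that is $O(Y/Q)$ values, but weight each by the $(b_1,d_1)$ count $O(X/|c_1|+1)$), sum over $c_2$ ($O(X/Q)$ values, weighted by $O(X/|c_2|+1)$), times $O(X)$ for $(a_1,a_2)$. The hard part — and the main obstacle — will be organizing this weighted sum cleanly: $\sum_{c_1} (X/|c_1| + 1)$ over $Y/2 < |c_1| \le Y$, $Q \mid c_1$ is $O((X/Y)(Y/Q) + Y/Q) = O(X/Q + Y/Q) = O(X/Q)$; and $\sum_{c_2} (X/|c_2|+1)$ over $Q \mid c_2$, $|c_1| \le |c_2| \le X$ is $O((X/Q)\log X + X/Q) = X^{1+o(1)}Q^{-1}$. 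Hence the total is $O(X) \cdot O(X/Q) \cdot X^{1+o(1)}Q^{-1} = X^{3+o(1)}Q^{-2}$, as desired. I would also need to handle the degenerate sub-case where the relevant line of $(b_i,d_i)$ solutions could be vertical or the $\gcd$ issues — but since $c_i \ne 0$ these are routine, and the divisor bound~\eqref{eq:tau} gives a safe $X^{o(1)}$ fallback if a more careful lattice-point count is awkward. Putting the pieces together yields the bound $X^{3+o(1)}Q^{-2}$.
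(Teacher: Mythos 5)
Your final (corrected) argument is sound and yields the stated bound, but it organizes the count differently from the paper. The paper first counts all admissible $A_1$ in one stroke via Lemma~\ref{lemma: c_1 large} --- which rests on the divisor bound~\eqref{eq:tau} applied to $a_1d_1=1+b_1c_1$ --- getting $X^{1+o(1)}YQ^{-1}$ choices, and then for each fixed $A_1$ sums over $c_2$, confines $a_2$ to an interval of length $O(|c_2|/Y)$ around $a_1c_2/c_1$, and finishes with the congruence $a_2d_2\equiv 1\pmod{c_2}$. You instead fix $(c_1,c_2)$ first, let $a_2$ range over $O(X)$ values, pin $a_1$ to $O(1)$ values using condition~(ii) together with $|c_1|\le|c_2|$, and count $(b_i,d_i)$ as lattice points on the line $a_id_i-b_ic_i=1$, which gives $O(X/|c_i|+1)$ for each $i$; the weighted sums over $c_1$ and $c_2$ then contribute $O(X/Q)$ and $X^{1+o(1)}Q^{-1}$, and the product is $X^{3+o(1)}Q^{-2}$. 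The two routes are essentially transposes of one another (the paper pins $a_2$ given $a_1/c_1$, you pin $a_1$ given $a_2/c_2$), and yours has the mild advantage of treating $A_1$ and $A_2$ symmetrically and of dispensing with the divisor bound entirely, since the line count replaces Lemma~\ref{lemma: c_1 large}. Your first attempt, which counted $b_i$ freely, was indeed an overcount for exactly the reason you identified, and the lattice-point fix is the right one; note also that the ``gcd issues'' you flag do not arise, because $\gcd(a_i,c_i)$ divides $a_id_i-b_ic_i=1$, so the solution set in $(b_i,d_i)$ is either empty or a full arithmetic progression with common difference $(a_i,c_i)$.
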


\begin{proof} Without loss of generality we may assume that $\min\{ |c_1|, |c_2| \}=|c_1|$.
 Hence by Lemma~\ref{lemma: c_1 large},  from Condition~(i) we have
$$
U_1 = X^{1+o(1)}YQ^{-1}
$$ choices for $A_1$. Assume now that $A_1$ is fixed.  

When $A_1$ is fixed, from Condition~(ii) we derive that 
\[
 \left| \alpha - \frac{a_2}{c_2} \right| \leq \Delta, 
 \]
 where  $\alpha = a_1/c_1$ and $\Delta = 4/Y$. Therefore, 
 \[
 \left| \alpha c_2  - a_2  \right| \leq \Delta |c_2|.
 \]
 Thus for a fixed $c_2$ there are $2\Delta |c_2| + 1$ choices for $a_2$.  For each such $a_2$, which we consider fixed,  
we conclude from  $a_2d_2 - b_2c_2 = 1$ that $a_2d_2\equiv 1 \pmod {c_2}$ and hence $d_2$ can take at most 
$2X/c_2 + 1 \ll X/c_2$ values after which $b_2$ is uniquely defined. Hence for a fixed $A_1$ there are at most 
\begin{align*}
U_2& \ll  \sum_{\substack{0<|c_2| \le X\\c_2 \equiv 0 \pmod Q}} (2\Delta c_2 + 1) X/c_2 \\
&=2\Delta X (X Q^{-1} +O(1))+ X Q^{-1}(\log (X/Q) + O(1))  \\ &\ll X^2Y^{-1}Q^{-1}  + XQ^{-1}\log X  
\end{align*}  
 choices for $A_2$. 
 
 Hence the  total number of pairs of matrices $(A_1,A_2) \in\SL_2(\Z;X)^2$ satisfying both Condition~(i) and~(ii)
 can be estimated as
 \[
 U_1 U_2 \le X^{1+o(1)}Y Q^{-1}  \( X^2Y^{-1}Q^{-1}  + X Q^{-1} \log X \) \le   X^{3+o(1)} Q^{-2}, 
 \]
 which concludes the proof.
\end{proof}

We can immediately deduce the following result which swaps $a_i$ with $d_i$ in the estimate.

\begin{lemma} 
\label{lemma: estimate -d_1/c_1}  
Let $Q\le Y\le X$. There are at most $X^{3+o(1)}Q^{-2} $ pairs of matrices $(A_1,A_2) \in \Gamma_0^*(Q,X)^2$ 
 satisfying the following conditions:
\begin{enumerate}
	\item $Y/2< \min\{ |c_1|, |c_2| \}\le Y$ 
	\item $\left| \frac{-d_1}{c_1} - \frac{-d_2}{c_2} \right| \leq \frac{1}{|c_1|} + \frac{1}{|c_2|}$.
\end{enumerate}
\end{lemma}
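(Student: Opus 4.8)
The statement to prove, Lemma~\ref{lemma: estimate -d_1/c_1}, is the exact analogue of Lemma~\ref{lemma: estimate a_1/c_1} with each $a_i$ replaced by $-d_i$ (equivalently $d_i$) in the separation condition. The natural plan is therefore not to redo the counting from scratch but to \emph{transport} the already proved estimate via the involution $A \mapsto A^{-1}$ on $\Gamma_0^*(Q,X)$.

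\textbf{Step 1: the key bijection.} Recall from the computation of $A^{-1}$ in the proof of Lemma~\ref{lem:EtoB} that if $A$ has entries $a,b,c,d$ then $A^{-1}$ has entries $d,-b,-c,a$. In particular, for $A \in \Gamma_0(Q)$ we have $c \equiv 0 \pmod Q$, hence $-c \equiv 0 \pmod Q$, so $A^{-1} \in \Gamma_0(Q)$ as well; moreover $\|A^{-1}\| = \|A\|$, and $c \ne 0$ iff $-c \ne 0$. Thus $A \mapsto A^{-1}$ is a bijection of $\Gamma_0^*(Q,X)$ onto itself (indeed an involution), and correspondingly $(A_1,A_2) \mapsto (A_1^{-1}, A_2^{-1})$ is a bijection of $\Gamma_0^*(Q,X)^2$ onto itself.

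\textbf{Step 2: matching the conditions.} Write $A_i'= A_i^{-1}$ with entries $a_i' = d_i$, $b_i' = -b_i$, $c_i' = -c_i$, $d_i' = a_i$. Then $|c_i'| = |c_i|$, so Condition~(1) for $(A_1,A_2)$, namely $Y/2 < \min\{|c_1|,|c_2|\} \le Y$, is literally the same as Condition~(i) for $(A_1',A_2')$. Next,
\[
\left| \frac{-d_1}{c_1} - \frac{-d_2}{c_2} \right| = \left| \frac{a_1'}{-c_1'} - \frac{a_2'}{-c_2'} \right| = \left| \frac{a_1'}{c_1'} - \frac{a_2'}{c_2'} \right|,
\]
and $1/|c_i| = 1/|c_i'|$, so Condition~(2) for $(A_1,A_2)$ is exactly Condition~(ii) for $(A_1',A_2')$. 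Hence the set of pairs counted in the present lemma is the image, under the bijection of Step~1, of the set of pairs counted in Lemma~\ref{lemma: estimate a_1/c_1}. A bijection preserves cardinality, so the bound $X^{3+o(1)}Q^{-2}$ from Lemma~\ref{lemma: estimate a_1/c_1} transfers verbatim, completing the proof.

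\textbf{Where the difficulty lies.} There is essentially no analytic obstacle here: the entire content is the bookkeeping observation that inversion in $\SL_2$ swaps the roles of $a$ and $d$ (up to signs that are invisible to absolute values) while fixing $|c|$ and the height. The one point that genuinely needs checking—and the only place an error could creep in—is that the congruence condition $c \equiv 0 \pmod Q$ is preserved under inversion, so that we stay inside $\Gamma_0^*(Q,X)$ and do not accidentally change the group; this is immediate since negation preserves divisibility by $Q$. Everything else is a one-line substitution.

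\qed
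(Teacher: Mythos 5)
Your proposal is correct and takes essentially the same route as the paper: both transport the bound of Lemma~\ref{lemma: estimate a_1/c_1} through the involution $A \mapsto A^{-1}$, which preserves $\|\cdot\|$, $|c|$, and the congruence $c \equiv 0 \pmod Q$, and swaps $a_i/c_i$ with $-d_i/c_i$ up to a sign that is absorbed by the absolute values. Your write-up is just a more explicit version of the paper's one-line argument.
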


\begin{proof} The map $A \mapsto A^{-1}$ preserves the $\| \cdot \|$ norm and leaves $|c|$ unchanged, 
hence preserves the first two conditions. On the other hand, as
 \[
 A^{-1} = \begin{bmatrix}
    d   & -b \\
    -c  & a \\
\end{bmatrix}\] we see that the terms $\frac{a_i}{c_i}$ is swapped to $\frac{-d_i}{c_i}$ so the estimate follows from the previous lemma.
\end{proof}

\begin{lemma} 
\label{lemma: estimate a_2/c_2}  
Let $Q\le Y\le X$. There are at most $X^{3+o(1)}Q^{-2} $ pairs of matrices $(A_1,A_2) \in\Gamma_0^*(Q,X)^2$ satisfying the following conditions:
\begin{enumerate}
	\item $Y/2< \min\{ |c_1|, |c_2| \}\le Y$ 

	\item $\left| \frac{-d_1}{c_1} - \frac{a_2}{c_2} \right| \leq \frac{1}{|c_1|} + \frac{1}{|c_2|}$.
\end{enumerate}
\end{lemma}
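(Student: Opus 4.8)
The plan is to deduce this lemma from Lemma~\ref{lemma: estimate a_1/c_1} by the same inversion device used to pass to Lemma~\ref{lemma: estimate -d_1/c_1}, but applied to only one of the two matrices. Concretely, I would consider the map
\[
\Phi\colon (A_1,A_2)\longmapsto \(A_1^{-1},A_2\),
\]
and argue that it is a bijection of $\Gamma_0^*(Q,X)^2$ onto itself which turns the two conditions of the present lemma into the two conditions of Lemma~\ref{lemma: estimate a_1/c_1}.

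First I would record that $\Phi$ is well defined and bijective: inversion preserves the naive height $\|\cdot\|$, sends the bottom-left entry $c$ of a matrix to $-c$ (so it preserves $|c|$, the condition $c\ne 0$, and the congruence $c\equiv 0\pmod Q$), and acts as an involution on the first coordinate. In particular $|c_1|$ and $|c_2|$ are unchanged by $\Phi$, so Condition~(1) is preserved verbatim.

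Next I would track Condition~(2). Since $A_1^{-1}$ has top-left entry $d_1$ and bottom-left entry $-c_1$, the ratio ``$a/c$ of $A_1^{-1}$'' equals $d_1/(-c_1)=-d_1/c_1$. Hence the inequality $\left|{-d_1}/{c_1}-a_2/c_2\right|\le 1/|c_1|+1/|c_2|$ of the present lemma is exactly inequality~(ii) of Lemma~\ref{lemma: estimate a_1/c_1} written for the pair $\(A_1^{-1},A_2\)$. Consequently $\Phi$ restricts to a bijection between the pairs counted here and the pairs counted in Lemma~\ref{lemma: estimate a_1/c_1}, and the bound $X^{3+o(1)}Q^{-2}$ follows immediately.

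There is no serious obstacle here: the only point needing care is to make sure that it is the \emph{first} matrix (the one carrying $-d_1/c_1$) that gets inverted, and that inverting it indeed produces the ratio $a/c$ and not $-d/c$; once this is pinned down the argument is purely formal, requiring no new counting estimates.
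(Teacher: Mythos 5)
Your proposal is correct and is essentially identical to the paper's own argument: the paper also deduces this lemma by applying the inversion $A_1\mapsto A_1^{-1}$ to only the first matrix, which converts the ratio $-d_1/c_1$ into the ``$a/c$'' ratio of $A_1^{-1}$ while preserving $\|\cdot\|$, $|c_1|$, and membership in $\Gamma_0^*(Q,X)$, thereby reducing the count to Lemma~\ref{lemma: estimate a_1/c_1}. Your write-up is in fact more detailed than the paper's one-line proof.
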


\begin{proof} This follows from a similar argument, except this time we only apply the transformation $A_1 \mapsto A_1^{-1}$ to one of the matrices.
\end{proof}

\begin{lemma}\label{lem: trace count} For any fixed $t \in \bZ$ the number of matrices $A \in \Gamma_0^*(Q,X)$ with  $\Tr(A)=t$ is at most $X^{1+o(1)}$.
\end{lemma}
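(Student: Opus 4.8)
The plan is to count matrices $A \in \Gamma_0^*(Q,X)$ with $\Tr(A) = a + d = t$ by first choosing the pair $(a,d)$ and then counting the admissible $(b,c)$. Once $a$ and $d$ are fixed, the defining equation $ad - bc = 1$ becomes $bc = ad - 1$, a fixed integer. So the argument naturally splits according to whether $ad - 1$ is zero or not.

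First I would handle the degenerate case $ad - 1 = 0$: this forces $a = d = \pm 1$, hence $t = \pm 2$, and then $bc = 0$; since we are inside $\Gamma_0^*$ we have $c \ne 0$, so $b = 0$, giving only the two matrices $\pm I_2$ — certainly $O(1)$, and in particular $X^{1+o(1)}$. Now assume $ad - 1 \ne 0$. The number of choices for $(a,d)$ with $a + d = t$ and $|a|, |d| \le X$ is $O(X)$ (indeed $d = t - a$, so $a$ ranges over an interval of length $O(X)$). For each such fixed pair, the integer $k := ad - 1$ is nonzero with $|k| \le X^2 + 1$, and we must count factorizations $bc = k$ with $c \equiv 0 \pmod Q$ (and $|b|, |c| \le X$, though we will not even need the size constraint). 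The number of such factorizations is at most $2\tau(|k|) = |k|^{o(1)} = X^{o(1)}$ by the divisor bound~\eqref{eq:tau}. Multiplying, the total count is at most $X \cdot X^{o(1)} = X^{1+o(1)}$, as claimed.

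The only point requiring a moment's care is that $X^{o(1)}$ bounds are uniform in the relevant way: the divisor bound $\tau(k) = |k|^{o(1)}$ as $|k| \to \infty$ yields, for every $\varepsilon > 0$, a bound $\tau(k) \le |k|^\varepsilon \le X^{2\varepsilon}$ for all $k$ with $|k| \le X^2 + 1$ once $X$ is large (the finitely many small $k$ contribute only a bounded factor), so summing over the $O(X)$ pairs $(a,d)$ keeps everything inside $X^{1+o(1)}$. I do not expect any real obstacle here; the lemma is essentially an immediate consequence of parametrizing by the diagonal and invoking~\eqref{eq:tau}. The mild subtlety — absorbing the $O(1)$ degenerate matrices and confirming the $o(1)$ is genuinely independent of which pair $(a,d)$ we picked — is routine.
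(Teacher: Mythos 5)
Your proposal is correct and follows essentially the same route as the paper: parametrize by the diagonal (so $d = t - a$ gives $O(X)$ choices), split on whether $ad - 1$ vanishes, and invoke the divisor bound~\eqref{eq:tau} for the factorizations $bc = ad - 1$. One small slip in your degenerate case: when $ad = 1$ and $b = 0$ with $c \ne 0$, the matrices are $\begin{bmatrix} \pm 1 & 0 \\ c & \pm 1 \end{bmatrix}$ for any nonzero $c \equiv 0 \pmod Q$ with $|c| \le X$ — there are $O(X/Q)$ of them, not just the two matrices $\pm I_2$ (which in fact have $c = 0$ and so do not lie in $\Gamma_0^*(Q,X)$ at all) — but since $O(X/Q) = O(X)$ this does not affect the claimed bound, and the paper itself records $O(X)$ choices here.
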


\begin{proof} As $a+d = t$ and $ad-bc=1$, we have $a(t-a) - bc = 1$. If $a(t-a) = 1$ there are are at most $2$ choices for $a$ and from 
$bc= 0$ we have $O(X)$ choices for $(b,c)$. 

If $a(t-a) \ne  1$, using the standard divisor bound~\eqref{eq:tau}, we see that for each choice of $a$ there are $X^{o(1)}$ choices for  $(b,c)$.

In each case, after $a$ is fixed, $d= t-a$ is uniquely defined.
 \end{proof}

\begin{lem}  \label{lem:Nonfree Pair}
The number of pairs of matrices $(A_1,A_2) \in\Gamma_0^*(Q,X)^2$ that do not form a ping pong pair is at most $X^{3+ o(1)} Q^{-1}$.
\end{lem}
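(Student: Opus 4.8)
\textbf{Proof plan for Lemma~\ref{lem:Nonfree Pair}.} The plan is to bound the number of non-ping-pong pairs by summing over dyadic ranges for $\min\{|c_1|,|c_2|\}$ and, in each range, controlling the various ways in which the four disks $\overline{\cD(A_1)}$, $\overline{\cD(A_1^{-1})}$, $\overline{\cD(A_2)}$, $\overline{\cD(A_2^{-1})}$ can fail to be pairwise disjoint. A pair fails to be a ping pong pair precisely when at least one of these six pairs of closed disks intersects. Two closed disks $\overline{\Bsf(u_1,r_1)}$ and $\overline{\Bsf(u_2,r_2)}$ intersect iff $|u_1-u_2|\le r_1+r_2$, so each failure mode is an explicit inequality between the centres $a_i/c_i$, $-d_i/c_i$ and radii $1/|c_i|$. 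I would split into two groups of failure modes: (a) the ``internal'' collisions $\overline{\cD(A_i)}\cap\overline{\cD(A_i^{-1})}\neq\emptyset$ for a single matrix $A_i$, and (b) the ``cross'' collisions involving one disk from $A_1$ and one from $A_2$.

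For group (a): by Corollary~\ref{cor:Discs} (or rather its proof), $\overline{\cD(A)}\cap\overline{\cD(A^{-1})}\neq\emptyset$ forces $|\Tr(A)|/|c|\le 2/|c|$, i.e.\ $|\Tr(A)|\le 2$, so $\Tr(A)\in\{-2,-1,0,1,2\}$. By Lemma~\ref{lem: trace count} there are at most $X^{1+o(1)}$ choices for such an $A$ (for each of the finitely many trace values), and then the other matrix of the pair ranges freely over $\Gamma_0^*(Q,X)$, of which there are $X^{2+o(1)}Q^{-1}$ by~\eqref{eq: Size G0}. This gives at most $X^{3+o(1)}Q^{-1}$ pairs from group (a), which is within the claimed bound.

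For group (b): each of the four cross-collision types is an inequality of the shape $|\xi_1/c_1 - \xi_2/c_2|\le 1/|c_1|+1/|c_2|$ where $\xi_i\in\{a_i,-d_i\}$. Note the trivial point that if the two disks from $A_1$ and $A_2$ intersect then in particular $\min\{|c_1|,|c_2|\}$ cannot be arbitrarily small relative to the interaction — but more to the point, I dyadically decompose: let $Y$ run over powers of $2$ with $Q\le Y\le X$, and restrict to pairs with $Y/2<\min\{|c_1|,|c_2|\}\le Y$. Then Lemmas~\ref{lemma: estimate a_1/c_1}, \ref{lemma: estimate -d_1/c_1} and \ref{lemma: estimate a_2/c_2} each bound the number of pairs in that dyadic range and satisfying the relevant inequality by $X^{3+o(1)}Q^{-2}$; the four cross types are covered by these three lemmas together with the obvious symmetry $(A_1,A_2)\leftrightarrow(A_2,A_1)$ (the type $|a_1/c_1-(-d_2/c_2)|\le\cdots$ is the swap of Lemma~\ref{lemma: estimate a_2/c_2}). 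Summing over the $O(\log X)=X^{o(1)}$ dyadic values of $Y$ keeps the bound at $X^{3+o(1)}Q^{-2}$, which is $\le X^{3+o(1)}Q^{-1}$ since $Q\ge 1$.

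Adding the contributions of groups (a) and (b) gives the total bound $X^{3+o(1)}Q^{-1}$, as claimed. The only mildly delicate point is bookkeeping: making sure every one of the six pairwise-disjointness failures is accounted for by one of the listed lemmas (up to the $A_1\leftrightarrow A_2$ swap and the $A\mapsto A^{-1}$ swap used to derive Lemmas~\ref{lemma: estimate -d_1/c_1} and~\ref{lemma: estimate a_2/c_2}), and that the dyadic sum over $Y$ indeed exhausts all pairs with $c_1,c_2\neq 0$. I do not expect a genuine obstacle here; the work is essentially a case analysis assembling the already-proved counting lemmas.
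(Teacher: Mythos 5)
Your proposal is correct and follows essentially the same route as the paper: split off the pairs where some $|\Tr(A_i)|\le 2$ via Lemma~\ref{lem: trace count}, then handle the four cross-collisions by dyadic decomposition of $\min\{|c_1|,|c_2|\}$ over $Q\le Y\le X$ using Lemmas~\ref{lemma: estimate a_1/c_1}, \ref{lemma: estimate -d_1/c_1} and~\ref{lemma: estimate a_2/c_2} together with the $A_1\leftrightarrow A_2$ symmetry. The bookkeeping you flag is exactly what the paper does, and your accounting of all six disjointness failures is complete.
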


\begin{proof} 
To ensure that $\cD(A_i)$ and $\cD(A_i^{-1})$ are disjoint, we just need $|\Tr(A_i)| >2$. 
There are only $X^{1+o(1)}$ such matrices $A_1$ or $A_2$ that fail this condition by 
Lemma~\ref{lem: trace count}. 

Thus there are at most $X^{3+o(1)} Q^{-1}$   pairs $(A_1,A_2) \in \Gamma_0^*(Q,X)^2$ with 
$$
\min\left\{|\Tr(A_1)|, |\Tr(A_2)|\right\} \le 2.
$$

We now count remaining pairs  $(A_1,A_2) \in \Gamma_0^*(Q,X)^2$. In particular $|c_1|, |c_2| \ge Q$. 

Let $Q\le Y\le X$.   
Now, $\cD(A_1)$ and $\cD(A_2)$ not being disjoint is equivalent to 
\begin{equation}
\label{eq:a1 c1 a2 c2}
\left| \frac{a_1}{c_1} - \frac{a_2}{c_2} \right| \leq \frac{1}{|c_1|} + \frac{1}{|c_2|} 
\end{equation}
appearing in Lemma~\ref{lemma: estimate a_1/c_1}. 

By Lemma~\ref{lemma: estimate a_1/c_1}, the number of pairs  $(A_1,A_2) \in \Gamma_0^*(Q,X)^2$ with $Y/2<\min\{|c_1|, |c_2|\} \le Y$ 
and satisfying~\eqref{eq:a1 c1 a2 c2} is at most $X^{3+o(1)} Q^{-2}$.
Likewise, we can estimate the number of pairs where $\cD(A_1)$ and $\cD(A_2^{-1})$ are not disjoint using the Lemmas~\ref{lemma: estimate -d_1/c_1} and~\ref{lemma: estimate a_2/c_2}, giving the same bound.

Taking $Y=X/{2^k}$, $k=0,\ldots,\lfloor\log (X/Q) \rfloor$, and using the fact that $O(\log X)=X^{o(1)}$, we conclude the proof.
\end{proof}

\subsection{Proof of Theorem~\ref{thm:Nonfree Matr G}}

For the upper bound, by Lemma~\ref{lem:Nonfree Pair} we see that the number of $s$-tuples~\eqref{eq:s-tuple G} 
which include a non-ping pong pair is at most
\[
X^{3+o(1)}Q^{-1}  \(X^2Q^{-1}\)^{s-2}= X^{2s-1+o(1)} Q^{-s+1}. 
\]
 Using  Lemma~\ref{lem:ping pong}  we conclude the proof.

\subsection{Proof of Corollary~\ref{cor:Nonfree SL2 Matr}}  

The upper bound follows from Theorem~\ref{thm:Nonfree Matr G} taken with $Q=1$. 

We now prove the lower bound. Let $\Phi_3(X)=X^2+X+1$ be the $3$rd cyclotomic polynomial. We note that any $2\times 2$ integer matrix $A$ with characteristic polynomial $\Phi_3$ must be in $\SL_2(\Z)$ since $\det A=\Phi(0)=1$. Therefore, by~\cite[Theorem~1.3]{EMS}, the number of matrices $A_1\in \SL_2(\Z;X)$ is at least $(C + o(1)) X$, with some absolute constant $C>0$. Therefore any such matrix $A_1\in\SL_2(\Z)$ satisfies $A_1^3=I_2$.

Choosing arbitrary 
 $A_2, \ldots, A_s$ we obtain
$$
N_s(X) \gg X^{1+2(s-1)}=X^{2s-1},
$$
which concludes the proof.

\section{Comments}  
\subsection{A lemma of Fuchs and Rivin~\cite{FuRi} }
\label{sec:FR-wrong}
We now turn to demonstrating that~\cite[Lemma~2.5]{FuRi} is wrong. For $A \in \SL_2(\Z)$ the norm
  \[\|A \|_{op} = \sqrt{\lambda_{\max} (A^t A)} \] was defined in \cite{FuRi} where $\lambda_{\max}$ denotes the largest eigenvalue. Note that $\| A\|_{op}$ is the largest singular value of $A$ and hence is the operator norm of $A$. For $X>0$ we let 
\[\cB_X = \{A \in \SL_2(\mathbb{Z}):~ \|A \|_{op} \leq X \}.\] 
It is well known that all matrix norms are equivalent and in particular 
\[
 \SL_2(\Z; \kappa_1X) \subseteq \cB_X\subseteq \SL_2(\Z; \kappa_2X)
\]
for some absolute constants $\kappa_1,\kappa_2>0$. 
So counting problems with respect to both norms are equivalent up to a constant.   

One easily checks that when $\Tr(A) \to \infty$ the fixed points of the M\"obius map~\eqref{eq:Mobius} approach $a/c$ and $-d/c$. 
It is  now evident that~\cite[Lemma~2.5]{FuRi} implies the following claim.
\begin{quotation}
{\it``Fix $r>0$. For $X>0$, if one samples uniformly $(A_1,A_2) \in \cB_X^2$ then \begin{equation} \label{eq: FR false bound} \left |\frac{a_1}{c_1} - \frac{a_2}{c_2} \right| > r \end{equation} holds with probability $1 + o(1)$ as $X \to \infty$.''}
\end{quotation}

To disprove this, we first note that for a random $A \in \cB_X$ the probability that $|c| \geq |a|$ is at least $1/2$ since the mapping 
\[
\begin{bmatrix}
    a   & b \\
    c  & d \\
\end{bmatrix} \mapsto \begin{bmatrix}
    0   & 1 \\
    -1  & 0 \\
\end{bmatrix} \begin{bmatrix}
    a   & b \\
    c  & d \\
\end{bmatrix} = \begin{bmatrix}
    c   & d \\
    -a  & -b \\
\end{bmatrix}
\] is a bijection on $\SL_2(\Z)$ that preserves the $\| \cdot \|_{op}$ norm (as it is a composition with an orthogonal matrix). Thus with probability at least $1/2$ we have that $-1 \leq  a/c \leq 1$. This already disproves the claim for $r>2$. For arbitrarily small $r$, say $r =1/k$ for some $k \in \bN$, the claim is still false since if one partitions $[-1,1]$ into $2k$ intervals of width $1/k$, then there is one such interval such that with probability at least 
\[
P_k \ge \frac{1}{2} \cdot \frac{1}{2k} =  \frac{1}{4k}
\] 
we have that $a/c$ lands in this interval. This means that with probability 
\[P_k^2 \ge \frac{1}{16k^2}=   \frac{1}{16} r^2
\] 
the inequality~\eqref{eq: FR false bound} fails.

We remark that our Lemma~\ref{lemma: estimate a_1/c_1} essentially replaces this fixed $r>0$ with $1/|c_1|+ 1/|c_2|$ 
and provides a quantitative estimate on the corresponding  probability.

\subsection{Lower bounds in higher dimensions}
We note that, using the same construction as in the proof of the lower bound of~\cite[Theorem 2.2]{OS}, one can also give a lower bound for the number of non-free $s$-tuples of matrices in $\SL_n(\Z;X)$ of any dimension $n\ge 2$. Indeed, for $n\ge 2$, we define
\[
w(n) = \max\left \{ \sum_{i=1}^h \varphi(k_i)^2:~n =\sum_{i=1}^h \varphi(k_i), ~ k_i \ge 2, \ i =1, \ldots, h\right\},
\]
where the maximum is taken over all such representations of all possible lengths $h \ge 1$ and, as usual, $\varphi(k)$ denotes the Euler function.  Then, as in the proof of~\cite[Theorem 2.2]{OS}, let $n = m_1+\ldots + m_h$ be a representation of $n$ as a sum of totients
$m_i = \varphi(k_i)$, $k_i \ge 2$,  $i =1,\ldots, h$.

Let $\Phi_{k_i}(X)$ be the $k_i$-th cyclotomic polynomial, $i =1,\ldots, h$, which is a monic irreducible polynomial of degree $m_i$ over $\Z$ and with the constant coefficient $\Phi_{k_i}(0)=1$. Therefore, any $m_i\times m_i$ integer matrix $B_i$ with characteristic polynomial $\Phi_{k_i}(X)$ satisfies $\det B_i=\Phi_{k_i}(0)=1$, and thus $B_i\in \SL_n(\Z)$. Moreover, $B_i^{k_i}=I_n$, $i=1,\ldots,h$, where $I_n \in \SL_n(\bZ)$ is the identity matrix

We now consider matrices $A_1\in \SL_n(\Z;X)$ of the form
\[
A_1 = \begin{pmatrix} B_1 &  &\mathbf{0}\\
\mathbf{0}&\ddots & \mathbf{0} \\
 \mathbf{0}&  & B_h
\end{pmatrix} 
\]
with diagonal cells formed by matrices $B_i \in  \SL_n \(\Z; X\)$ having characteristic polynomial $\Phi_{k_i}(X)$ and zeros everywhere else. 
Clearly for $k = k_1 \ldots  k_h$ we have $A_1^k=I_n$.  

Following now the computation in the proof of~\cite[Theorem 2.2]{OS}, using~\cite[Theorem~1.3]{EMS}, we see that there are 
\[
\# \{A_1\in \SL_n(\Z;X):~ A_1^n = I_n\}  \gg X^{-n/2} \prod_{i=1}^h  X^{m_i^2/2}.
\] 
Using now the asymptotic formula of~\cite{DRS}, we have, asymptotically, $(2X)^{(s-1)(n^2-n)}$ possibilities for arbitrary $(s-1)$-tuples $(A_2,\ldots,A_s)\in \SL_n(\Z;X)^{s-1}$. We thus conclude that the number $N_{s,n}(X)$ of non-free $s$-tuples $(A_1,\ldots,A_s)\in \SL_n(\Z;X)^s$ satisfies
\[
 N_{n,s}(X) \gg  X^{(s-1)n^2 -(s-1/2)n + w(n)/2} .
 \] 
 
\subsection{Asymptotic count of congruence subgroups}\label{sec: asymptotic Gamma}
 It is an interesting question to obtain an asymptotic formula for $\#\Gamma_0(Q,X)$.  
   However, for  the purpose of 
 understanding the strength of  Theorem~\ref{thm:Nonfree Matr G} the bound~\eqref{eq: Size G0} is quite enough.
 In fact, the upper bound in~\eqref{eq: Size G0} is instant from Lemma~\ref{lemma: c_1 large}
 taken with $Y = X$. 
 For the lower bound, we write $c = Qe$ for an positive integer $e \le X/Q$ and note that congruence 
 $ad \equiv 1 \pmod {c}$, $1 \le a,d \le  c$ has $\varphi(c)$ solutions. After this, we set 
 $b = (ad-1)/c$ and note that $0 \le b < c \le X$.  Thus, if $c \equiv 0 \pmod Q$ then the matrix $A$ defined by this 
 quadruple $(a,b,c,d)$ contributes to $\Gamma_0(Q,X)$.  So we write $c = Qe$ for an positive integer $e \le X/Q$ and 
using for a positive integer $k$ the Euler function satisfies
\[
  \varphi(k) \gg \frac{k}{\log \log (k+3)}, 
\]
see~\cite[Theorem~328]{HaWr},  we derive
 \begin{equation}
\label{eq:LB G0}
\begin{split}
\#  \Gamma_0(Q,X)& \ge \sum_{1 \le e \le X/Q} \varphi(eQ)
 \gg  \sum_{1 \le e \le X/Q} \frac{eQ}{\log \log ( eQ+3)}\\
 &   \ge  \frac{Q}{\log \log (X+3)}   \sum_{1 \le e \le X/Q}  e \gg  \frac{X^2}{Q \log \log (X+3)} .
\end{split}
\end{equation}

 Finally, we note that our approach also allows to get nontrivial bounds on the 
 number of non-free $s$ tuples of matrices from the other subgroups of $\SL_2(\Z)$ such as
 \[
\Gamma(Q) = \left\{  \begin{bmatrix}
   a   & b \\
   c  & d \\
\end{bmatrix} \in\SL_2(\Z):~ a,d \equiv 1 \pmod Q, \ b,c \equiv 0 \pmod Q\right\}
\]
and 
 \[
\Gamma_1(Q) = \left\{  \begin{bmatrix}
   a   & b \\
   c  & d \\
\end{bmatrix} \in\SL_2(\Z):~ a,d\equiv 1 \pmod Q, \ c  \equiv 0 \pmod Q\right\}.
\]
 


\section*{Acknowledgements}  
The authors are very grateful to Emmanuel Breuillard for indicating the paper~\cite{FuRi} and that their methods may 
lead to explicit bounds on our counting problem and to Julian Demeio for his interest and useful comments. 

During the preparation of this work, the authors were   supported in part by the  
Australian Research Council Grant  DP200100355.

\end{document}